\setlist{nolistsep}
\newcommand{\Label}{\label}
\newcommand{\PG}{\textup{PG}}
\newcommand{\Q}{\mathcal{Q}}
\newcommand{\T}{\mathcal{T}}
\renewcommand{\H}{\mathcal{H}}
\newcommand{\K}{\mathcal{K}}
\newcommand{\E}{\mathcal{E}}
\renewcommand{\S}{\mathcal{S}}
\newcommand{\h}{{\mathsf h}}
\newcommand{\sw}{s}
\renewcommand{\sb}{t}
\newtheorem{theorem}{Theorem}[section]
\newtheorem{result}[theorem]{Result}
\newtheorem{lemma}[theorem]{Lemma}
\newtheorem{remark}[theorem]{Remark}
\newenvironment{proof}{\noindent{\bfseries Proof}\hspace{0.5em}}{ \null \hfill $\square$ \par}
\begin{document}

\title{Characterising hyperbolic hyperplanes of a non-singular quadric in $\PG(4,q)$}
\author{S.G. Barwick, Alice M.W. Hui, Wen-Ai Jackson and Jeroen Schillewaert
\date{\today}
}

\maketitle

AMS code: 51E20

Keywords: projective geometry, quadrics, hyperplanes

\begin{abstract}
Let $\H$ be a non-empty set of hyperplanes in $\PG(4,q)$, $q$ even,  such that every point of $\PG(4,q)$ lies in either $0$, $\frac12q^3$ or $\frac12(q^3+q^2)$ hyperplanes of $\H$, and every plane of $\PG(4,q)$ lies in $0$ or at least $\frac12q$ hyperplanes of $\H$. Then $\H$ is the set of all hyperplanes which meet a given non-singular quadric $Q(4,q)$ in a hyperbolic quadric.
\end{abstract}

\section{Introduction}

This article gives a characterisation of the hyperbolic hyperplanes of a  non-singular quadric  $Q(4,q)$ of $\PG(4,q)$, see \cite{HT}  for background on non-singular quadrics. Note that we refer to the hyperplanes of $\PG(4,q)$ as solids.
 There are a number of known characterisations of points, lines, planes and solids relating to the non-singular quadric $Q(4,q)$.  
 In 1956, Tallini \cite{tallini} characterised sets of points in $\PG(n,q)$ by their intersection numbers with lines, the following is a version specialised to $\PG(4,q)$.

\begin{result}\label{tallini}\cite{tallini} Let $\mathcal A$ be a set of points in $\PG(4,q)$,  $q \neq 2$, with $ |\mathcal A| \geq q^{3}+q^2+q+1 $. Suppose every line of $\PG(4,q)$ contains $0$, $1$, $2$ or $q+1$ points of $\mathcal A$. Then $\mathcal A$ is either all of $\PG(4 ,q)$; a quadric; the union of a quadric and its nucleus ($q$  even); or the union of a solid and a subspace.
\end{result}

Buekenhout \cite{buekenhout} characterised a set of points in $\PG(n,q)$ by looking at its plane intersections; and a modification specific to $\PG(4,q)$ was given by Butler in \cite{butler1}.
%
%
Ferri and Tallini \cite{ferri} proved a characterisation of the pointset of $Q(4,q)$ using plane and solid intersections.
%
%
%
The fourth author proved a corollary of this result in \cite{schil} (see below), which we will use in our proof. In joint work with De Winter \cite{DWS}, he characterised non-singular polar spaces by their intersection numbers with solids and codimension 2 spaces, which in particular reproves the following result   without using Tallini's result.
It is not possible to characterise quadrics by solid intersection numbers alone due to the existence of quasi-quadrics \cite{QQ}.

\begin{result} \label{schil}\cite{schil}
Let ${\mathcal K}$ be a set of points in $\PG(4,q)$, such that each plane of $\PG(4,q)$ meets $\K$ in either $1$, $q+1$ or $2q+1$ points, and each solid meets $\K$ in either $q^2+1$, $q^2+q+1$ or $(q+1)^2$ points, then $\K$ is the set of points of
 a non-singular quadric $Q(4,q)$.
\end{result}

When $q$ is odd, there is a  polarity associated with   $Q(4,q)$ which interchanges planes and lines. Using this polarity, we can dualise these   characterisations  when  is  $q$ odd. For example, dualising Results~\ref{tallini} and \ref{schil} give  characterisations of the tangent solids of a non-singular quadric $Q(4,q)$,  $q$ odd. 
 As a counterpoint to these, this article gives a characterisation involving solids when $q$ is even.

There are also characterisations known of sets of lines and planes related to $Q(4,q)$. 
For  $q$  odd, \cite{deresmini}, \cite{bichara} and \cite{butler1} give 
 characterisations of the  set of tangents and generator lines of  $Q(4,q)$.
These results can be dualised  using the polarity of $Q(4,q)$, $q$ odd,  to give   characterisations of a set of planes   that meet  $Q(4,q)$ in a conic.
The following result of 
Butler \cite{butler1} gives a 
combinatorial   characterisation of a set of planes  in $\PG(4,q)$ that meet  $Q(4,q)$ in a conic which holds for all $q$.  

\begin{result}\Label{Butler2}\cite{butler1} Let $\S$ be a set of planes in $\PG(4,q)$, such that
\begin{itemize}
\item[(I)]  every point lies on $q^4$ or $q^4 +q^2$ planes of $\S$;
 \item[(II)] every line lies on $0$ or $q^2$ planes of $\S$;
 \item[(III)] every solid contains at least one plane of $\S$.
  \end{itemize}
Then $\S$ is the set of planes meeting a non-singular quadric $Q(4,q)$ in a conic.
\end{result}


This article 
 proves  a   combinatorial characterisation of solids  in the style of  Result~\ref{Butler2}. 
We first count   hyperbolic  solids  incidences in $Q(4,q)$.

\begin{lemma}
Let $Q(4,q)$ be a non-singular quadric in $\PG(4,q)$, $q$ even, and let  $\H$ be the set of solids  that meet $Q(4,q)$ in a hyperbolic quadric. Then 
\begin{itemize}
\item[(a)]  every point of $\PG(4,q)$ lies on $0$ or $\frac12q^3$ or $\frac12(q^3+q^2)$  solids  of $\H$;

  \item[(b)]  every line of $\PG(4,q)$  lies in  $0$, $\frac12q(q-1)$, $\frac12q^2$, $\frac12q(q+1)$ or $q^2$   solids   of $\H$; and
  \item[(c)]  every plane of $\PG(4,q)$  lies in    $0$, $\frac12q$  or $q$   solids   of $\H$.  \end{itemize}
\end{lemma}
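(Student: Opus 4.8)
The strategy is a direct count using the well-understood substructure of the non-singular quadric $Q(4,q)$, $q$ even. First I would record the standard facts: $Q(4,q)$ has $q^3+q^2+q+1$ points and $(q^2+1)(q+1)$ lines (its generators), every point of $Q(4,q)$ lies on $q+1$ generators, and each hyperplane (solid) of $\PG(4,q)$ meets $Q(4,q)$ either in a hyperbolic quadric $Q^+(3,q)$ (a $(q+1)^2$-point hyperbolic quadric, the tangent-plane-at-nucleus type being excluded here), an elliptic quadric $Q^-(3,q)$ ($q^2+1$ points), or a quadratic cone $qQ(2,q)$ with vertex a point ($q^2+q+1$ points). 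Write $h,e,c$ for the numbers of each type; since the total number of solids is $q^4+q^3+q^2+q+1$ and one can count incident (point, solid)-pairs with $Q(4,q)$ in two ways, one gets the global values $h=\frac12 q^2(q^2+1)$ (this is the classical count of hyperbolic sections). I would also use that when $q$ is even there is a nucleus $N$ of $Q(4,q)$, and a solid is hyperbolic, elliptic or tangent according to combinatorial conditions on whether it contains $N$ and how it meets $Q(4,q)$.

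For part (a), I would split on the position of the point $P$ relative to $Q(4,q)$ and the nucleus $N$. If $P\in Q(4,q)$: the solids through $P$ correspond bijectively (by taking quotient) to the hyperplanes of the residue $\PG(3,q)$; the section of $Q(4,q)$ by such a solid is hyperbolic iff, in the quotient geometry at $P$, the corresponding plane meets the projected quadric appropriately — concretely, $P^\perp$ (the tangent solid) is the unique tangent solid through $P$ and it is never hyperbolic, and among the remaining $q^3+q^2+q$ solids through $P$ one counts hyperbolic ones via the projection of $Q(4,q)$ from $P$ onto a $\PG(3,q)$, which is a parabolic quadric $Q(3,q)$... wait, projection from a point of the quadric gives a bijection onto $\PG(3,q)$ with the lines on $P$ mapping to points of an elliptic/hyperbolic structure; carefully, solids on $P$ not equal to $P^\perp$ split into hyperbolic and elliptic in numbers $\frac12(q^3+q^2)-(\text{something})$ and I expect the answer $\frac12 q^3$ here. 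If $P=N$ (nucleus, $q$ even): every solid through $N$ is tangent (meets $Q(4,q)$ in a cone), so $P$ lies on $0$ hyperbolic solids. If $P\notin Q(4,q)\cup\{N\}$: then $P^\perp$ is a solid meeting $Q(4,q)$ in a conic-cone or a non-degenerate quadric; I would count via the line $PN$ and the $q$ quadric points... the cleanest route is: the hyperbolic solids on $P$ are those whose pole (w.r.t. the quadratic form) behaves correctly, and a short double count of (solid through $P$, quadric point in that solid) pairs pins down the value $\frac12(q^3+q^2)$. The key identity throughout is that the three numbers $0,\frac12 q^3,\frac12(q^3+q^2)$ are exactly what the global total $h$ forces once one knows the $\{0\}$-case (points on the nucleus and points external in the appropriate sense) has the right multiplicity.

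For parts (b) and (c), I would run the analogous residue argument one and two dimensions down. For a line $\ell$: the solids containing $\ell$ form a pencil of size $q+1$ (the planes of the quotient $\PG(2,q)=\PG(4,q)/\ell$... no, solids on a line correspond to planes of a $\PG(2,q)$, so there are $q^2+q+1$ of them — correction: lines lie in $q^2+q+1$ solids? In $\PG(4,q)$ a line lies in exactly $q^2+q+1$ solids). Hence the case list $0,\frac12q(q-1),\frac12q^2,\frac12q(q+1),q^2$ must sum-consistently partition these $q^2+q+1$ solids; one enumerates according to how $\ell$ meets $Q(4,q)$ and $N$: $\ell$ secant/tangent/external, and whether $N\in\ell$ or $N\in\ell^\perp$ etc. For each of the (boundedly many) cases one computes the section type of $Q(4,q)$ by each solid in the pencil — this reduces to a $2$- or $3$-variable quadratic-form computation over $\Fq$. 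For a plane $\pi$: it lies in exactly $q+1$ solids, and one checks that among these $q+1$ solids the number of hyperbolic ones is $0$, $\frac12 q$, or $q$, again by case analysis on $\pi\cap Q(4,q)$ (a conic, a line-pair, a line, a point, empty, or $\pi\subseteq$ a tangent cone) together with the position of $N$; here the parity arguments specific to even $q$ (e.g. a pencil of conics through fixed points splitting evenly into hyperbolic/elliptic with a distinguished degenerate member) are what produce the $\frac12 q$.

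**Main obstacle.** The routine part is the dimension-counting; the genuinely delicate part is the even-$q$ case analysis for (b) and (c), where the nucleus $N$ intervenes and one must correctly identify, within each pencil of solids, which members give hyperbolic versus elliptic versus cone sections — the split into $\frac12q(q\pm1)$ versus $\frac12 q^2$ depends sensitively on whether $N$ lies in the line/plane or in its perp, and getting the $\frac12 q$ bookkeeping right (rather than off by one, or $q$ versus $q/2$) is where care is needed. I expect to handle this by reducing each configuration to the classification of pencils of quadratic forms in $2$ or $3$ variables over $\Fq$ with $q$ even, using that the associated bilinear form is alternating.
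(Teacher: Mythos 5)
Your outline has the right general shape (case analysis on the position of the point/line/plane relative to $Q(4,q)$ and its nucleus $N$), but as written it contains two concrete errors and leaves the actual computations undone. First, for a point $P\in Q(4,q)$ with $q$ even you assert that $P^\perp$ is \emph{the unique} tangent solid through $P$; this is false. Since $q$ is even, a solid is tangent if and only if it contains the nucleus $N$, so there are exactly $q^2+q+1$ tangent solids through $P$ (those containing the line $PN$), and only $q^3$ of the solids through $P$ are hyperbolic or elliptic. Your proposed split of ``the remaining $q^3+q^2+q$ solids'' into hyperbolic and elliptic therefore starts from the wrong total. Second, you have the two nonzero values of part (a) swapped: a point of $Q(4,q)$ lies on $\frac12(q^3+q^2)$ hyperbolic solids and a point off $Q(4,q)$ other than $N$ lies on $\frac12 q^3$ of them, whereas you ``expect'' $\frac12 q^3$ for quadric points and derive $\frac12(q^3+q^2)$ for external points. (A quick check: counting incident pairs, $\frac12q^2(q^2+1)\cdot(q+1)^2=(q^3+q^2+q+1)\cdot\frac12(q^3+q^2)$, consistent only with the assignment just stated.) Beyond these, the parts you yourself flag as the delicate ones --- the pencil-of-quadrics case analysis giving the values $\frac12 q(q\pm1)$, $\frac12 q^2$, $q^2$ in (b) and $\frac12 q$, $q$ in (c) --- are only promised, not carried out, so the proposal is a plan rather than a proof.

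For comparison, the paper does not perform this case analysis from scratch: it quotes from Hirschfeld and Thas the existence of the nucleus, the fact that tangent solids are exactly those through $N$, the counts of hyperbolic, elliptic and tangent solids, and the transitivity of the stabiliser of $Q(4,q)$ on quadric points and on non-nucleus external points. Transitivity makes the number of hyperbolic solids through a point constant on each orbit, so the values in (a) follow from the global counts by the double count above, and (b), (c) follow by the analogous incident-pair counts for each orbit of lines and planes. If you want to salvage your residue/pencil approach, it can be made to work, but you must treat the even-$q$ tangency structure correctly (all solids through $N$ are tangent) and actually carry out the quotient computations you sketch.
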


\begin{proof}
All references in this proof refer to \cite{HT}.  By Corollary 1.8, $Q(4,q)$ has a nucleus. By Lemma 1.13(ii) and Theorem 1.54, a solid is a tangent solid if and only if it contains the nucleus. Moreover Theorem 1.57 counts how many hyperbolic, elliptic and tangent solids there are. By Theorem 1.46 the subgroup of $\mathrm{PGL}(5,q)$ which fixes $Q(4,q)$ acts transitively on the set of points on $Q(4,q)$ and by Theorem 1.49 it has two orbits on the points of $\PG(4,q)\setminus Q(4,q)$, one of which is a singleton, namely the nucleus. Combining the above facts with the exact numbers from \cite{HT} it follows that the nucleus is not contained in any hyperbolic solid, the other points not in $Q(4,q)$ are contained in exactly $\frac12q^3$ hyperbolic solids, and the points of $Q(4,q)$ are contained in exactly $\frac12(q^3+q^2)$ hyperbolic solids, proving part (a).

Part (c) can be proved similarly by counting the incident plane-solid pairs $(\pi, \Pi)$ where $\pi \subset \Pi$, (where $\Pi$ is a hyperbolic solid) for different types of planes $\pi$ using Theorem 1.57, Lemma 1.13 and Corollary 1.68, see also Example 1.70. More precisely, any plane not containing the nucleus which meets $Q(4,q)$ in a conic is contained in exactly
$\frac12q$ hyperbolic solids, any plane meeting $Q(4,q)$ in 2 lines is contained in exactly $q$ hyperbolic solids, all other planes are not contained in any hyperbolic solids. Part (b) can be proved similarly by counting   incident line-solid pairs. 
\end{proof}

%
%
%
%
%

In this article, we assume part (a) and a weaker version of part (c) to prove a characterisation of the hyperbolic solids of   $Q(4,q)$, $q$ even. The main result of this article is the following. 

\begin{theorem}\label{mainresult} Let $\H$ be a non-empty set of solids in $\PG(4,q)$, $q$ even,  such that
 \begin{itemize}
\item[(I)] every point of $\PG(4,q)$ lies in either $0$, $\frac12q^3$ or $\frac12(q^3+q^2)$ solids of $\H$, and
\item[(II)]
a plane which is contained in a solid of $\mathcal{H}$ is contained in at least $\frac12q$ solids of $\mathcal{H}$.
\end{itemize}
Then the set of points contained in $\frac12(q^3+q^2)$ solids of $\H$ forms a non-singular quadric $Q=Q(4,q)$ and
$\H$ is the set of solids that meets $Q$ in a non-singular hyperbolic quadric $Q^+(3,q)$.
\end{theorem}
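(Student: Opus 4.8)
\emph{Approach.} The plan is to argue directly in $\PG(4,q)$: since duality turns both hypotheses into conditions of the same shape, nothing is gained by dualising. Write $h=|\H|$ and, for $i=0,1,2$, let $S_i$ be the set of points lying on exactly $0$, $\tfrac12q^3$, $\tfrac12q^2(q+1)$ solids of $\H$ respectively; by (I) the sets $S_0,S_1,S_2$ partition the point set. The engine of the proof is a family of local counts. Given a subspace $U$ of $\PG(4,q)$ of dimension $1$, $2$ or $3$, summing over $X\in U$ the number of solids of $\H$ through $X$, and using that a solid of $\H$ meets $U$ either in $U$ or in a hyperplane of $U$, one obtains — after inserting the allowed multiplicities of (I) — a linear identity expressing $|S_2\cap U|$ in terms of $|S_0\cap U|$, $h$ and the number of solids of $\H$ containing $U$. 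Three instances will be used throughout: for a line $\ell$, an identity involving $|S_0\cap\ell|$ and $j(\ell):=\#\{\Pi\in\H:\ell\subseteq\Pi\}$; for a plane $\pi$, an identity involving $|S_0\cap\pi|$ and $k(\pi):=\#\{\Pi\in\H:\pi\subseteq\Pi\}$, which for the value of $h$ below reduces to $|S_2\cap\pi|=2k(\pi)+1+q\,|S_0\cap\pi|$; and for a solid $\Pi$, an identity involving $|S_0\cap\Pi|$ and whether or not $\Pi\in\H$.

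\emph{Pinning down the parameters.} The first main task is to show that $S_0$ is a single point $N$, that $h=\tfrac12q^2(q^2+1)$, and consequently that $|S_2|=q^3+q^2+q+1$ and $|S_1|=q^4-1$. One uses that a point of $S_0$ lies on no solid of $\H$, so $j(\ell)=0$ for every line $\ell$ meeting $S_0$, which via the line identity makes $q\,|S_0\cap\ell|-|S_2\cap\ell|$ constant along all such lines; combined with the three global double counts — of points, of incident pairs $(P,\Pi)$ with $\Pi\in\H$, and of incident triples $(P,\Pi,\Pi')$ with $\Pi,\Pi'\in\H$ distinct and $P\in\Pi\cap\Pi'$ — with condition (II), and with integrality of the intersection sizes, this pins everything down: $S_0=\emptyset$ and $|S_0|\ge 2$ are excluded, and once $|S_0|=1$ the global counts leave only one admissible value of $h$. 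I regard this bookkeeping as one of the two main obstacles.

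\emph{Intersection numbers.} Granting this, the solid identity with $|S_0\cap\Pi|\in\{0,1\}$ immediately yields that every solid meets $S_2$ in $q^2+q+1$ points if it contains $N$, in $(q+1)^2$ points if it lies in $\H$, and in $q^2+1$ points otherwise — exactly the solid intersection numbers in Result~\ref{schil}. The plane identity gives $|S_2\cap\pi|=q+1$ for a plane through $N$ (which lies in no solid of $\H$), and $|S_2\cap\pi|=2k(\pi)+1$ for a plane $\pi\not\ni N$, where $k(\pi)\in\{0\}\cup[\tfrac q2,q]$ because of (II) and because $\langle\pi,N\rangle$ is a solid through $\pi$ not in $\H$. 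To invoke Result~\ref{schil} it remains to show $k(\pi)\in\{0,\tfrac q2,q\}$, i.e.\ that a plane not through $N$ lies in $0$, $\tfrac q2$ or $q$ solids of $\H$. I expect the exclusion of the intermediate values $\tfrac q2<k(\pi)<q$ to be the other main obstacle: it cannot follow from the global counts (every admissible distribution of the $k(\pi)$ satisfies them), and seems to require a local argument — for instance, passing at a point $P\in S_2\cap\pi$ to the quotient $\PG(4,q)/P$, where the solids of $\H$ through $P$ form a point set whose line sections have size $0$ or at least $\tfrac q2$ by (II), and combining this with the already-established solid intersection numbers of $S_2$ to rule out the offending planes.

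\emph{Conclusion.} With plane intersection numbers $\{1,q+1,2q+1\}$ and solid intersection numbers $\{q^2+1,q^2+q+1,(q+1)^2\}$ secured, Result~\ref{schil} applies to $\K:=S_2$ and shows that $S_2$ is a non-singular quadric $Q=Q(4,q)$ — the first assertion of the theorem. For the second: each $\Pi\in\H$ avoids $N$ and, by the solid identity, meets $Q$ in $(q+1)^2$ points; since the only quadric of $\PG(3,q)$ with $(q+1)^2$ points is $Q^+(3,q)$, every $\Pi\in\H$ is a hyperbolic solid of $Q$. Thus $\H$ lies inside the set of hyperbolic solids of $Q$, and as $|\H|=h=\tfrac12q^2(q^2+1)$ is exactly the number of hyperbolic solids of $Q(4,q)$ (by the incidence count in the Lemma), $\H$ coincides with the set of solids meeting $Q$ in a $Q^+(3,q)$.
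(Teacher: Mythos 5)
Your skeleton matches the paper's (derive the plane and solid intersection numbers of the set $S_2$ of black points, feed them into Result~\ref{schil}, then identify $\H$ with the hyperbolic solids), and your local identities are correct, but the two steps you yourself flag as ``the main obstacles'' are exactly the content of the proof and are not carried out, so as it stands the argument has genuine gaps. First, the determination of $|\H|=\frac12q^2(q^2+1)$ and $|S_0|=1$ does not follow from the three global counts you list: the point count, the pair count and the triple count are three equations in the four unknowns $|S_0|,|S_1|,|S_2|,|\H|$, so they leave a free parameter, and no case analysis on $|S_0|$ is indicated that closes this. The paper closes it with two extra ingredients you do not mention: (i) divisibility/congruence information extracted from the pair and triple counts, namely that $\h=|\H|/(\frac12q^2)$ is an integer with $\h\equiv 0$ or $1\pmod q$ and $(q+1)\mid \h(\h-2)$ (Lemma~\ref{lem:A-size1}); and (ii) a count of incidences between a \emph{fixed} solid $\Sigma\in\H$ (which exists since $\H\neq\emptyset$) and the other members of $\H$, which gives $\sb-q=(\h-q^2)(q^2+q+1)$ for the number $\sb$ of black points of $\Sigma$ and hence $q^2\le\h<q^2+q$; together these force $\h=q^2+1$, after which $|S_2|=q^3+q^2+q+1$, $|S_1|=q^4-1$ and $|S_0|=1$ drop out with no separate exclusion of $|S_0|=0$ or $|S_0|\ge 2$. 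Note that condition (II) plays no role in this part.

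Second, and more seriously, the exclusion of $\frac q2<k(\pi)<q$ (equivalently, of black-point plane sections strictly between $q+1$ and $2q+1$) is only ``expected'' to follow from an unexecuted quotient construction, and your remark that it ``cannot follow from the global counts'' points you away from the argument that actually works: a second-moment count localised to a fixed solid $\Sigma\in\H$. Every plane $\pi_i\subset\Sigma$ lies in a solid of $\H$, so by (II) and your identity $x_i=2k(\pi_i)+1$ one has $q+1\le x_i\le 2q+1$; counting black points and ordered pairs of distinct black points of $\Sigma$ (there are $(q+1)^2$ of them) over the planes of $\Sigma$ gives $\sum_i x_i=(q+1)^2(q^2+q+1)$ and $\sum_i x_i(x_i-1)=(q+1)^3(q^2+2q)$, whence $\sum_i\bigl(x_i-(q+1)\bigr)\bigl(x_i-(2q+1)\bigr)=0$; since each summand is $\le 0$ on the admissible range, every plane of $\Sigma$ has $x_i\in\{q+1,2q+1\}$, and as every plane in a solid of $\H$ arises this way, Lemma~\ref{plane-count-new} follows. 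Until you supply arguments of this strength for both steps, the appeal to Result~\ref{schil} and the concluding identification of $\H$ (which is fine, and matches the paper) rest on unproved claims.
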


Consider a quasi-quadric as defined in \cite{QQ}, more precisely: Let $n = 2m$ and let $q$ be even. A \emph{parabolic quasi-quadric} with nucleus the point $N$ in $\mathrm{PG}(n, q)$ is a set $\mathcal{K}$ of 
$\frac{q^n-1}{q-1}$ points such that each line on $N$ contains a unique point of $\mathcal{K}$ and each hyperplane not on $N$ meets $\mathcal{K}$ in $\frac{(q^m+1)(q^{m-1}-1)}{q-1}$ or 
$\frac{(q^m - 1)(q^{m-1} + 1)}{q - 1}$ points. (Note for $n=4$ these values are $q^2+1$ and $(q+1)^2$). These are constructed in Theorem 10 of \cite{QQ} using a pivoting method and for all $q>2$ even there exist quasi-quadrics which are not quadrics.

An auxiliary result leading to Theorem \ref{mainresult} is 

\begin{theorem}\label{auxiliary}
Let $\H$ be a non-empty set of solids in $\PG(4,q)$, $q$ even, satisfying condition (I).
Then $\H$ is the set of solids intersecting a quasi-quadric in $\mathrm{PG}(4,q)$ (which is formed by the points contained in $\frac{1}{2}(q^3+q^2)$ elements of $\H$) in $q^2+2q+1$ points.
\end{theorem}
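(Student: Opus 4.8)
The plan is to decode the hypothesis into intersection numbers and then invoke the quasi-quadric definition directly. First I would set up a double-counting framework: since $\H$ is non-empty, pick a solid $\Sigma_0 \in \H$ and count, for each point $P$, the solids of $\H$ through $P$. Condition (I) partitions the points of $\PG(4,q)$ into three classes according to whether they lie in $0$, $\frac12 q^3$, or $\frac12(q^3+q^2)$ solids of $\H$; call these the \emph{external}, \emph{ordinary}, and \emph{special} points respectively, and let $\K$ be the set of special points. The goal is to show $|\K| = \frac{q^4-1}{q-1} = q^3+q^2+q+1$, that $\K$ has a nucleus $N$ through which every line meets $\K$ exactly once, and that every solid meets $\K$ in $q^2+1$, $q^2+q+1$, or $(q+1)^2$ points, with the solids of $\H$ being exactly those meeting $\K$ in $(q+1)^2 = q^2+2q+1$ points.

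The key computational step is a standard inclusion-exclusion / standard-equations argument. Let $h = |\H|$, and for a solid $\Sigma$ let $x_\Sigma$, $y_\Sigma$ be the number of ordinary and special points on $\Sigma$. Fix $\Sigma \in \H$ and count flags $(P, \Sigma')$ with $P \in \Sigma \cap \Sigma'$, $\Sigma' \in \H$, $\Sigma' \neq \Sigma$: each point of $\Sigma$ contributes (its multiplicity in $\H$) minus one, while each such $\Sigma'$ meets $\Sigma$ in a plane, hence in $q^2+q+1$ points. This gives a linear relation between $x_\Sigma$, $y_\Sigma$ and $h$. Counting incident pairs $(P,\Sigma')$ with $P$ a point of $\PG(4,q)$ and $\Sigma' \in \H$ in two ways gives $h(q^3+q^2+q+1) = \frac12 q^3 |O| + \frac12(q^3+q^2)|\K|$ where $O$ is the set of ordinary points; and counting triples $(P, \Sigma', \Sigma'')$ of a point on two distinct solids of $\H$ gives a second global relation. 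Together with $|O| + |\K| + |E| = q^4+q^3+q^2+q+1$ these determine $|\K|$, $|O|$, $h$ and, crucially, force each solid not in $\H$ to meet $\K$ in one of the two quasi-quadric values while each solid in $\H$ meets it in $(q+1)^2$. A short argument with the line through two points then shows every line meets $\K$ in $1$ or $q+1$ points, and counting shows the "$1$"-lines all pass through a common point $N$ (the nucleus): for a fixed special point $P$, the solids of $\H$ on $P$ and the global counts pin down how the $q^2+q+1$ lines through $P$ distribute among the secant types, and comparing across all special points isolates $N$.

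The main obstacle I expect is \emph{extracting the line intersection numbers} — the hypotheses only constrain points and solids, so I cannot directly read off that lines meet $\K$ in $1$ or $q+1$ points; this has to be forced by combining the solid intersection numbers (once established) with a counting argument over the pencil of solids through a fixed line, using that a line lies in $q+1$ solids and each contributes a known number of special points. A secondary delicate point is ruling out the degenerate possibility that $\K$ is empty or a proper subspace; here non-emptiness of $\H$ together with condition (I) should guarantee $\K \neq \emptyset$, and the precise value of $|\K|$ rules out $\K$ being contained in a hyperplane (a hyperplane has only $q^3+q^2+q+1$ points, so $\K$ in a solid would force that solid to meet $\K$ in more than $(q+1)^2$ points, contradicting the established solid intersection numbers). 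Once $|\K| = q^3+q^2+q+1$, the line condition, the nucleus, and the solid intersection numbers are all in hand, $\K$ satisfies the definition of a parabolic quasi-quadric in $\PG(4,q)$ verbatim, and $\H$ is identified as its set of $(q^2+2q+1)$-secant solids, completing the proof.
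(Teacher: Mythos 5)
Your global counting framework (counting point--solid incidences, triples with two solids of $\H$, and flags inside a fixed $\Sigma\in\H$) is essentially the paper's route to the solid intersection numbers, though note that the linear relations alone do not ``determine'' $|\H|$, $|\K|$, $|O|$: the paper needs integrality and divisibility arguments ($\tfrac12q^2$ divides $|\H|$, $\h\equiv 0,1 \pmod q$, $q+1\mid \h(\h-2)$, combined with the bound $q^2\le \h<q^2+q$ coming from the per-solid count) to pin down $|\H|=\tfrac12q^2(q^2+1)$ and hence $|\K|=q^3+q^2+q+1$, and you should also keep the three classes of solids separate (those of $\H$ meet $\K$ in $(q+1)^2$ points, those avoiding the $0$-point in $q^2+1$, and those through the $0$-point in $q^2+q+1$, the last value being permitted because the quasi-quadric definition only constrains hyperplanes not on the nucleus).

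The genuine gap is your treatment of the nucleus. You propose to show that \emph{every} line meets $\K$ in $1$ or $q+1$ points and that the $1$-secant lines are concurrent. Both claims are false and cannot be forced by condition (I): even when $\K$ is an honest quadric $Q(4,q)$ there are external lines and bisecant lines, and the tangent lines at the points of $Q(4,q)$ are $1$-secants that do not pass through the nucleus; moreover, condition (I) is also satisfied by the $(q+1)^2$-secant solids of non-quadric quasi-quadrics (which exist for all even $q>2$), whose general line intersections are unconstrained, so no such strong line condition can be a consequence of (I). What the quasi-quadric definition actually requires, and what must be proved, is only that each line through the nucleus $N$ meets $\K$ exactly once, where $N$ has to be identified structurally as the unique point lying on no solid of $\H$ (the counts give exactly one such point). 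The correct chain is: there is a unique $0$-point $N$; every solid through $N$ contains $q^2+q+1$ points of $\K$; hence, counting over the $q+1$ solids through a plane on $N$, every plane through $N$ contains exactly $q+1$ points of $\K$; hence, counting over the $q^2+q+1$ planes through a line on $N$, every line through $N$ contains exactly one point of $\K$. (Two further slips in your sketch: a line of $\PG(4,q)$ lies in $q^2+q+1$ solids, not $q+1$, and a point lies on $q^3+q^2+q+1$ lines, not $q^2+q+1$.) With the nucleus step repaired as above, the identification of $\H$ as the set of $(q^2+2q+1)$-secant solids goes through as you describe.
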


We show the necessity of Condition (II) in Theorem \ref{mainresult} by showing:
\begin{lemma}
The set of solids intersecting a quasi-quadric $\mathcal{Q}$ in $\mathrm{PG}(4,q)$ in $(q+1)^2$ points satisfies Condition (I). 
In particular,  every quasi-quadric in $\mathrm{PG}(4,2)$ is a non-singular quadric.
\end{lemma}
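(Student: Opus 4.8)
\noindent\emph{Proof proposal.}
The plan is to verify Condition~(I) directly from the definition of a parabolic quasi-quadric, and then to deduce the $\PG(4,2)$ statement by feeding the result into Theorem~\ref{mainresult}. So let $\Q$ be a parabolic quasi-quadric in $\PG(4,q)$, $q$ even, with nucleus $N$, and let $\H$ be the set of solids meeting $\Q$ in $(q+1)^2$ points. First I would record the elementary consequences of the definition: $|\Q|=q^3+q^2+q+1$, every line on $N$ meets $\Q$ in a unique point, and every solid not on $N$ meets $\Q$ in $q^2+1$ or $(q+1)^2$ points. A solid $\Pi$ through $N$ is partitioned by its $q^2+q+1$ lines on $N$, each carrying exactly one point of $\Q$, whence $|\Q\cap\Pi|=q^2+q+1\ne(q+1)^2$; so no solid of $\H$ passes through $N$, and the incidence count at $P=N$ is $0$, one of the permitted values.

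Next, fix a point $P\ne N$. Of the $q^3+q^2+q+1$ solids on $P$, exactly $q^2+q+1$ contain the line $PN$ (hence $N$), and the remaining $q^3$ avoid $N$ and thus meet $\Q$ in $q^2+1$ or $(q+1)^2$ points, say $a$ of the former kind and $b$ of the latter, with $a+b=q^3$; here $b$ is precisely the number of solids of $\H$ on $P$. To pin down $b$ I would double count the flags $(R,\Pi)$ with $R\in\Q$, $\Pi$ a solid on $P$ missing $N$, and $R\in\Pi$. Summing over $\Pi$ gives $a(q^2+1)+b(q+1)^2=q^3(q^2+1)+2qb$. Summing over $R$ uses the standard facts that a line of $\PG(4,q)$ lies in $q^2+q+1$ solids and a plane in $q+1$ solids: each $R\in\Q$ off the line $PN$ lies together with $P$ in $(q^2+q+1)-(q+1)=q^2$ solids missing $N$, while the unique point $R_0$ of $\Q$ on $PN$ contributes $q^3$ if $R_0=P$ (that is, $P\in\Q$) and $0$ if $R_0\ne P$. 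Equating the two expressions yields $b=\frac12(q^3+q^2)$ when $P\in\Q$ and $b=\frac12q^3$ when $P\notin\Q\cup\{N\}$, which is exactly Condition~(I). As a by-product, $\Q$ is precisely the set of points lying in $\frac12(q^3+q^2)$ solids of $\H$; and the analogous count over all $q^4$ solids missing $N$ gives $|\H|=\frac12(q^4+q^2)>0$, so $\H\ne\emptyset$.

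Finally, take $q=2$. Then $\H$ is non-empty and satisfies Condition~(I) by the above, while Condition~(II) of Theorem~\ref{mainresult} is vacuous for $q=2$ (it asks only that a plane contained in a solid of $\H$ lie in at least $\frac12q=1$ solids of $\H$). Theorem~\ref{mainresult} then says that the set of points lying in $\frac12(q^3+q^2)=6$ solids of $\H$ is a non-singular quadric $Q(4,2)$; but by the previous paragraph that set is $\Q$ itself, the three possible counts $0,4,6$ being distinct at $q=2$. Hence $\Q=Q(4,2)$ is a non-singular quadric.

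The only delicate point is the local double count: one must keep careful track of the line $PN$ and of the distinguished point $R_0=\Q\cap PN$, and split the argument into the cases $P\in\Q$ and $P\notin\Q$. Once the bookkeeping is arranged the remaining computation is routine arithmetic, and crucially only the defining properties of a parabolic quasi-quadric are used, with no further structural input about $\Q$.
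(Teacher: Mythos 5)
Your proof is correct and follows essentially the same route as the paper: for a fixed point $P\ne N$ you double count incidences between points of $\Q$ and solids through $P$ avoiding the nucleus, splitting into the cases $P\in\Q$ and $P\notin\Q$, which is exactly the paper's count (up to the harmless inclusion of $R=P$). You are in fact slightly more complete than the paper, since you also verify explicitly that $N$ lies in no solid of $\H$, that $\H\ne\emptyset$, and that the $\PG(4,2)$ claim follows from Theorem~\ref{mainresult} because Condition~(II) is vacuous when $\frac12q=1$.
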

\begin{proof}
Let $\H$ be the set of hyperplanes which intersect $\mathcal{Q}$ in $(q+1)^2$ points and let $N$ denote the nucleus. Fix a point $X\in \mathcal{Q}$ and let $H$ be the number of elements of $\mathcal{H}$ containing $X$. We count in two ways the pairs $(P,\Pi)$ where $P\in \mathcal{Q}, P\neq X$ where $\Pi$ is a solid containing $P,X$ but not $N$. Firstly we count the solids through $X$, and then we look at the points $P\in\Q$, $P\ne X$.  Note that from the definition of the quasi-quadric the line $PX$ does not contain $N$, so  there are $q^2$ solids through $PX$ which do not contain $N$.
\[
H(q^2+2q) + (q^3-H)q^2 = (q^3+q^2+q)q^2.
\] 
We use the same argument for a point $Y$ not belonging to the quadric and different from $N$ and let $T$ be the number of elements of $\mathcal{H}$ containing $Y$.  Note that the line $NY$ has one point $Z$ of $\Q$ and so in this case there are no solids in $\H$ containing $Y$ and $Z$.
\[
T(q^2+2q+1) + (q^3-T)(q^2+1) = (q^3+q^2+q)q^2+1.0.
\]
Hence $H = \frac12(q^3+q^2)$ and $T = \frac{1}{2}q^3$.
\end{proof}

\section{Proof of Theorem~\ref{mainresult}}

Let $\H$ be a non-empty set of solids in $\PG(4,q)$, $q$ even,   such that
 \begin{itemize}
\item[(I)] every point of $\PG(4,q)$ lies on $0$ or $\frac12q^3$ or $\frac12(q^3+q^2)$ solids of $\H$, and \item[(II)] a plane which is contained in a solid of $\mathcal{H}$   is contained in at least $\frac12q$ solids of $\mathcal{H}$.

\end{itemize}

By assumption (I), there are three types of points. A point is called a
\begin{itemize}
\item \emph{red point} if it lies in $0$ solids of $\H$,
\item \emph{white point} if it lies in $\frac12q^3$ solids of $\H$,
\item \emph{black point} if it lies in $\frac12(q^3+q^2)$ solids of $\H$.
\end{itemize}

Throughout, we will denote the number of red points of $\PG(4,q)$ by $r$, the number of white points of $\PG(4,q)$ by $w$, and the number of black points of $\PG(4,q)$ by $b$.

A solid in $\H$ does not contain any red points, and we partition the solids of $\PG(4,q)$ into $\H$ and two further types of solids:
\begin{itemize}
\item
let $\T$ be the set of solids containing at least one red point,
\item
let $\E$ be the set of solids not in $\H$ that contain no red points.
\end{itemize}

In the following, we will show that the black points form a non-singular quadric $Q(4,q)$ whose nucleus is the unique red point. Further, we show that $\H$ is the set of solids meeting $Q(4,q)$ in a hyperbolic quadric. Indeed, $\E$ is the set of solids meeting $Q(4,q)$ in an elliptic quadric, and $\T$ is the set of solids meeting $Q(4,q)$ in a conic cone.

\smallskip

We proceed with a series of lemmas.

\begin{lemma}\Label{lem:A-size1}
Let  $$\h= \dfrac{|\H|}{\frac12q^2},$$  then $\h$ is an integer congruent to either 0 or 1 modulo $q$,    and $q+1$ divides $\h(\h-2)$.
\end{lemma}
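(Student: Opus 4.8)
The plan is to count incident point--solid pairs in two different ways to pin down $\h$ arithmetically. Fix a solid $\Sigma \in \H$. Every point of $\Sigma$ is white or black (no solid of $\H$ contains a red point), so write $w_\Sigma$ and $b_\Sigma$ for the number of white and black points in $\Sigma$, with $w_\Sigma + b_\Sigma = q^3+q^2+q+1$. Now count pairs $(P,\Sigma')$ with $P \in \Sigma'$, $\Sigma' \in \H$, $P \in \Sigma$: summing over $P$ gives $\frac12 q^3 w_\Sigma + \frac12(q^3+q^2) b_\Sigma$, while summing over $\Sigma'$ gives $|\H\cap\text{(solids through some structure)}|$; more usefully, one counts triples or uses the standard fact that the number of solids of $\H$ meeting $\Sigma$ in a plane is determined once we know $|\H|$ and the point-type distribution. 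The cleanest route: count pairs $(P, \Sigma')$ with $\Sigma' \in \H$ and $P$ a black point of $\Sigma'$. Summing over $\Sigma'$ gives $\sum_{\Sigma'} b_{\Sigma'}$; summing over black points $P$ gives $b \cdot \frac12(q^3+q^2)$. Combined with the analogous identity for white points and the total count $(q^3+q^2+q+1)|\H| = \sum_{\Sigma'}(w_{\Sigma'}+b_{\Sigma'})$, one extracts a linear relation among $|\H|$, $b$, $w$.

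Next I would count pairs $(\Sigma,\Sigma')$ of distinct solids of $\H$: two solids meet in a plane, and that plane lies in some number of solids of $\H$; using condition (II) is not needed here, but the key point is that $\sum_{\Sigma\ne\Sigma'} 1 = |\H|(|\H|-1)$, while grouping by the common plane $\pi = \Sigma\cap\Sigma'$ gives $\sum_\pi n_\pi(n_\pi-1)$ where $n_\pi$ is the number of solids of $\H$ on $\pi$. To evaluate $\sum_\pi n_\pi$ and $\sum_\pi n_\pi^2$ I would count incident (plane, solid of $\H$) pairs and incident (point, plane, solid of $\H$) flags, reducing everything to the point-type numbers $r,w,b$ and to $|\H|$. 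The divisibility statements should then fall out: writing $|\H| = \frac12 q^2 \h$, the relation $q+1 \mid \h(\h-2)$ comes from reducing a quadratic expression in $\h$ modulo $q+1$ (using $q \equiv -1 \pmod{q+1}$), and $\h \equiv 0$ or $1 \pmod q$ comes from reducing modulo $q$ a count that must be an integer, such as the number of solids of $\H$ through a fixed point of a given colour, or the number through a fixed plane.

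Concretely, I expect the argument to run as follows. First, establish that $\h$ is an integer: this requires showing $\frac12 q^2 \mid |\H|$, which I would get by fixing a white or black point $P$ and a line through it, then counting solids of $\H$ through that line, or more robustly by a double count of flags (point in solid of $\H$) that forces $|\H|$ to be divisible by $\frac12 q^2$ once $q$ is even (here parity of $\frac12 q^3$ versus $\frac12(q^3+q^2)$ matters — note $\frac12 q^3$ and $\frac12(q^3+q^2)$ differ by $\frac12 q^2$, so they are congruent mod $\frac12 q^2$, which is exactly what makes the total divisible). Second, the mod $q$ statement: reduce the flag count $\sum_P (\text{solids of }\H \text{ on } P) = (q^3+q^2+q+1)|\H|$ — wait, that's $(\text{number of points per solid})\cdot|\H|$ — modulo an appropriate prime power; combined with $|\H| = \frac12 q^2\h$ and the point counts being multiples of $\frac12 q^2$, the residue of $\h$ mod $q$ is constrained to $\{0,1\}$. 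Third, the $q+1$ divisibility: this is the quadratic one, coming from the pairs-of-solids count $|\H|(|\H|-1) = \sum_\pi n_\pi(n_\pi-1)$ after expressing the right side via point/line/plane incidences; substituting $|\H|=\frac12 q^2\h$ and reducing mod $q+1$ yields $\h(\h-2)\equiv 0 \pmod{q+1}$.

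The main obstacle will be the bookkeeping in the second double count — evaluating $\sum_\pi n_\pi(n_\pi-1)$ requires knowing $\sum_\pi n_\pi$ and $\sum_\pi n_\pi^2$ separately, and $\sum_\pi n_\pi^2$ naturally wants a flag count over (point, plane) pairs or (line, plane) pairs, which brings in the finer incidence structure that we do not yet control (we only know point types, not line or plane types, at this stage). The trick will be to choose the double count so that these finer quantities cancel or combine into something expressible purely in $r$, $w$, $b$, $|\H|$ and the fixed parameters of $\PG(4,q)$; I would look for a count of triples (point, line, solid of $\H$) or (point, solid of $\H$, solid of $\H$) that telescopes appropriately. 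Once the two or three linear/quadratic relations among $\h$, $r$, $w$, $b$ are in hand, the stated congruences are a routine modular reduction using $q \equiv 0 \pmod q$ and $q \equiv -1 \pmod{q+1}$.
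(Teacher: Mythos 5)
Your outline starts in the right place (the flag count over pairs $(P,\Pi)$, $\Pi\in\H$, $P\in\Pi$, which indeed gives $\tfrac12q^2\mid|\H|$ once you add the observation that $q^3+q^2+q+1$ is odd and coprime to $q$), but the two congruences are where the proposal has genuine gaps. For $\h\equiv 0,1\pmod q$, the mechanisms you name would not work: reducing the single flag count modulo a prime power only yields $wq+b(q+1)=\h(q+1)(q^2+1)$, i.e.\ $\h\equiv b\pmod q$ with $b$ still unknown at this stage, and ``the number of solids of $\H$ through a fixed point'' is prescribed by hypothesis (so its integrality says nothing), while the number through a fixed plane is not controlled by condition (I) at all. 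The congruence actually comes from a second-moment identity: counting triples $(P,\Pi,\Sigma)$ with $\Pi\neq\Sigma$ in $\H$ and $P\in\Pi\cap\Sigma$ gives $w\cdot\tfrac12q^3(\tfrac12q^3-1)+b\cdot\tfrac12(q^3+q^2)(\tfrac12(q^3+q^2)-1)=|\H|(|\H|-1)(q^2+q+1)$, since any two distinct solids meet in a plane of $q^2+q+1$ points; eliminating $b$ via the flag count produces $\h\bigl((q+1-\h)(q^2+q+1)+q(q^3+q^2-2)\bigr)=wq$, whence $\h(\h-1)\equiv 0\pmod q$, and you then need $q$ to be a power of $2$ (a prime power) to split this into $\h\equiv 0$ or $1\pmod q$ — a use of the hypothesis $q$ even that your proposal never makes.

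For $q+1\mid\h(\h-2)$, your plane-based count $|\H|(|\H|-1)=\sum_\pi n_\pi(n_\pi-1)$ is, as stated, a tautology (every unordered pair of distinct solids of $\H$ determines its common plane), so it carries no arithmetic content and the difficulty you flag about $\sum_\pi n_\pi^2$ is real but beside the point: the information enters only when each pair is weighted by the $q^2+q+1$ points of its common plane and the sum is regrouped by points, which is exactly the $(P,\Pi,\Sigma)$ triple count you mention only parenthetically and never execute. From the same displayed relation one finishes as follows: the flag count reduced mod $q+1$ (where $q\equiv-1$, so $q^3+q^2\equiv 0$ and $q^3+q^2+q+1\equiv 0$) forces $q+1\mid w$, and then reducing $\h\bigl((q+1-\h)(q^2+q+1)+q(q^3+q^2-2)\bigr)=wq$ modulo $q+1$ gives $\h(2-\h)\equiv 0\pmod{q+1}$. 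So the decisive ingredients — the triple count, the elimination of $b$, the divisibility $q+1\mid w$, and the prime-power argument for the mod $q$ split — are all missing or misattributed in the proposal, even though the correct tool is named in passing.
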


\begin{proof}
We first show that $\frac12q^2$ divides $|\H|$.
Count in two ways the incident pairs $(P,\Pi)$ where $\Pi$ is a solid in $ \H$ and $P$ is a point of $\Pi$.
\begin{eqnarray}
r.0+w.\tfrac12 q^3 + b.\tfrac12(q^3+q^2)& =& |\H|(q^3+q^2+q+1)\label{eqn:OP}
\end{eqnarray}
As $\frac12q^2$ divides the left hand side and gcd$(q^2,q^3+q^2+q+1)=1$, it follows $\frac12q^2$ divides $|\H|$.

Now we show that $\h= \dfrac{|\H|}{\frac12q^2}$ is congruent to either 0 or 1 modulo $q$.
Count in two ways incident triples $(P , \Pi,\Sigma)$ where $\Pi,\Sigma$ are distinct solids of $\H$ and $P$ is a point in $\Pi\cap\Sigma$. We have
\begin{eqnarray}
r.0.0+ w.\tfrac12q^3.(\tfrac12q^3-1)+b.\tfrac12(q^3+q^2).(\tfrac12(q^3+q^2)-1)&=&|\H|(|\H|-1)(q^2+q+1).\label{eqn:1}
\end{eqnarray}
Substituting (\ref{eqn:OP}) into (\ref{eqn:1}) to eliminate $b$ and then simplifying gives
\begin{eqnarray}\label{eqn:wq}
\begin{aligned}
\h\left( (q+1-\h)
(q^2+q+1)+q(q^3+q^2-2)\right)&=&wq.\\
\end{aligned}
\end{eqnarray}
Hence $\h (\h-1)\equiv 0 \pmod{q}$. 
As $q$ is a power of 2, it follows that either $\h \equiv 0 \pmod{q}$ or $\h \equiv 1 \pmod{q}$.
It follows from (\ref{eqn:OP}) that $q+1|w$, hence using (\ref{eqn:wq}), we deduce that  $q+1| \h(\h-2)$. 
\end{proof}

\begin{lemma}\Label{item:hsize}
Each solid in $\H$ contains $(q+1)^2$ black points, and $|\H|=\frac12 q^2(q^2+1)$.
\end{lemma}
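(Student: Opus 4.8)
The plan is to pin down the size $|\H|$ first via the congruence conditions from Lemma~\ref{lem:A-size1}, then use double counting with a \emph{third} solid to control the black point count in a fixed solid of $\H$. From Lemma~\ref{lem:A-size1} we know $\h = |\H|/(\tfrac12 q^2)$ satisfies $\h \equiv 0$ or $1 \pmod q$ and $q+1 \mid \h(\h-2)$. Combining these two congruences (working modulo $q$ and modulo $q+1$, which are coprime) should leave only a short list of residues for $\h$ modulo $q(q+1)$; equation~(\ref{eqn:wq}) then forces $w \ge 0$ and, together with the obvious bound $|\H| \le$ (total number of solids) $= q^4 + q^3 + q^2 + q + 1$, i.e.\ $\h \le 2(q^3 + q^2 + q + 1)/q^2$, so $\h$ is roughly at most $2q + 2$. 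The target value is $\h = q^2 + 1$ --- wait, that exceeds the bound, so in fact the right reading is that $|\H| = \tfrac12 q^2(q^2+1)$ means $\h = q^2 + 1$, and the crude bound above must be replaced by a better one; so the first real task is to get a sharp \emph{upper} bound on $|\H|$, for which I would count incident pairs $(\Pi, P)$ with $P$ a black point and bound $b$ from above, or better, revisit~(\ref{eqn:wq}) to express $w$ as a function of $\h$ and use $w \ge 0$ together with $b \ge 0$ (from~(\ref{eqn:OP})) and $r \ge 0$ to trap $\h$ in a narrow interval, inside which the congruences leave $\h = q^2 + 1$ as the only option --- hmm, but $q^2+1 \not\equiv 0,1 \pmod q$ in general since $q^2 + 1 \equiv 1 \pmod q$, good, that is consistent; and $\h(\h-2) = (q^2+1)(q^2-1) = q^4 - 1$, and $q+1 \mid q^4 - 1$, also consistent. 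So the constraints are compatible with $\h = q^2+1$; the work is to show no \emph{smaller} admissible $\h$ can occur, which I expect to follow from a lower bound on $|\H|$ forced by non-emptiness of $\H$ together with condition (II) and the fact (to be extracted) that a solid in $\H$ has a controlled black-point count.

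For the black-point count in a fixed solid $\Pi_0 \in \H$: let $k$ be the number of black points in $\Pi_0$ (a priori possibly depending on $\Pi_0$). Double count, within $\Pi_0$, incident pairs $(P, \Sigma)$ where $P$ is a point of $\Pi_0$ and $\Sigma \in \H$, $\Sigma \neq \Pi_0$, with $P \in \Sigma$; a solid $\Sigma \neq \Pi_0$ meets $\Pi_0$ in a plane, which carries $q^2 + q + 1$ points, so the count equals $(|\H| - 1)(q^2 + q + 1)$ on one side. On the other side, summing over $P \in \Pi_0$: a black point of $\Pi_0$ lies in $\tfrac12(q^3+q^2) - 1$ further solids of $\H$, a white point in $\tfrac12 q^3 - 1$, a red point in none; but $\Pi_0$ has no red points, so if $\Pi_0$ has $k$ black points and $(q^3 + q^2 + q + 1) - k$ white points, we get $k(\tfrac12(q^3+q^2) - 1) + (q^3+q^2+q+1-k)(\tfrac12 q^3 - 1) = (|\H|-1)(q^2+q+1)$. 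Solving this linear equation for $k$ (using the now-known value $|\H| = \tfrac12 q^2(q^2+1)$) gives $k = (q+1)^2$, independent of $\Pi_0$, as claimed. Conversely, plugging $k = (q+1)^2$ back in and double counting $(P, \Pi)$ with $P$ black, $\Pi \in \H \ni P$ --- that is, $\sum_{\Pi \in \H} k = b \cdot \tfrac12(q^3+q^2)$ --- together with the count of black points $b$ extractable from~(\ref{eqn:OP}) and~(\ref{eqn:wq}), confirms consistency.

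The logical ordering I would use: (1) extract from~(\ref{eqn:wq}) and $w \ge 0$, $b \ge 0$, $r \ge 0$ explicit numerical bounds on $\h$; (2) use non-emptiness of $\H$ plus condition (II) (every plane in a solid of $\H$ lies in $\ge \tfrac12 q$ solids of $\H$) to get a lower bound on $|\H|$ --- e.g.\ fix $\Pi_0 \in \H$, each of its $q^2+q+1$ planes lies in $\ge \tfrac12 q$ solids of $\H$, and a mild inclusion--exclusion over these planes yields $|\H| \ge$ something of order $q^3$, hence $\h \ge$ order $q$, which combined with the congruences and the upper bound isolates $\h = q^2 + 1$; (3) deduce $|\H| = \tfrac12 q^2(q^2+1)$; (4) run the three-solid (really ``second solid restricted to a fixed $\Pi_0$'') double count above to get $k = (q+1)^2$. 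The main obstacle is step (2): squeezing a clean lower bound on $|\H|$ out of condition (II) without yet knowing the geometry, since the planes of $\Pi_0$ overlap and the solids of $\H$ through different planes of $\Pi_0$ may coincide; I would handle this by double counting pairs (plane $\pi \subset \Pi_0$, solid $\Sigma \in \H$ with $\pi \subset \Sigma$) and bounding how many planes of $\Pi_0$ a given $\Sigma \neq \Pi_0$ can contain (namely exactly one, since $\Sigma \cap \Pi_0$ is a plane), which converts the per-plane lower bound $\tfrac12 q$ directly into $|\H| - 1 \ge (q^2+q+1)(\tfrac12 q - 1) \cdot$(something), comfortably forcing $\h$ past all the small admissible residues.
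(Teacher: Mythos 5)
Your step (4) is exactly the right double count, and once $|\H|=\tfrac12 q^2(q^2+1)$ is known it does give $(q+1)^2$ black points per solid of $\H$. The genuine gap is in how you pin down $\h=q^2+1$. Your step (2) extracts from Condition (II) a lower bound of order $\tfrac12 q^3$ on $|\H|$, i.e.\ $\h\gtrsim q$, and you claim that this, the congruences of Lemma~\ref{lem:A-size1}, and an upper bound of roughly $q^2+q$ isolate $\h=q^2+1$. They do not: for every even $q$ the value $\h=q^2-q$ satisfies $\h\equiv 0\pmod q$ and $q+1\mid \h(\h-2)$ (since $q^2-q-2\equiv 0\pmod{q+1}$) and lies in that interval; so do $\h=q+1$ and, when $q+1$ is composite, further residues such as $\h=10q+1$ for $q=32$. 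So the isolation step, as ordered, is unproven, and the ``main obstacle'' you identify is being attacked with a tool (Condition (II)) that cannot reach far enough --- indeed the paper's lemma uses only Condition (I), which matters later for Theorem~\ref{auxiliary} and Remark~\ref{rem}.

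The fix is a reordering of your own ingredients, and it is what the paper does: run your step-(4) count \emph{first}, with $|\H|=\tfrac12 q^2\h$ left symbolic. For a fixed $\Sigma\in\H$ with $\sb$ black points it gives the linear relation $\sb-q=(\h-q^2)(q^2+q+1)$, and the trivial bounds $0\le \sb\le q^3+q^2+q+1$ force $q^2\le \h<q^2+q$; then $\h\equiv 0,1\pmod q$ leaves $\h\in\{q^2,q^2+1\}$, and $q+1\mid\h(\h-2)$ kills $q^2$ (as $q^2(q^2-2)\equiv -1\pmod{q+1}$), giving $\h=q^2+1$ and $\sb=(q+1)^2$ in one stroke. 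Alternatively, your step (1), if actually carried out, also closes the gap without Condition (II): solving (\ref{eqn:OP}) and (\ref{eqn:wq}) for $w$ and $b$ as functions of $\h$, the constraint $b\ge 0$ yields $\h\ge q^2$ and $w\ge 0$ yields $\h\le q^2+q-1$, recovering the same window. As written, though, the proposal leaves the decisive lower bound on $\h$ resting on an argument that fails, so the determination of $|\H|$ is not established.
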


\begin{proof}
Let $\Sigma\in\H$, let $\sw $ be the number of white points in $\Sigma$ and let $\sb $ be the number of black points in $\Sigma$. As $\Sigma$ does not contain any red points, we have
\begin{eqnarray}
\sw +\sb &=&q^3+q^2+q+1.\label{eqn:3-pt-A}
\end{eqnarray}
Count in two ways incident pairs $(P,\Pi)$ where $\Pi\in\H$, $\Sigma\ne \Pi$ and point $P\in\Sigma\cap\Pi$. We have
\begin{eqnarray*}
\sw .(\tfrac12 q^3-1)+\sb .(\tfrac12(q^3+q^2)-1)&=&(|\H|-1)(q^2+q+1).
\end{eqnarray*}
Simplifying using (\ref{eqn:3-pt-A}) using the notation of Lemma \ref{lem:A-size1} yields
\begin{eqnarray}
\sb -q&=&\h(q^2+q+1)-q^2(q^2+q+1).\label{eqn:3-pt-A-m}
\end{eqnarray}

Since $0\leq t \leq q^3+q^2+q+1$, from (\ref{eqn:3-pt-A-m}) we have $q^2 \leq \h <q^2+q$. It follows from  Lemma~\ref{lem:A-size1} that $\h=q^2+1$ and $t=(q+1)^2$ as required.
\end{proof}

We now count the number of points of each colour in $\PG(4,q)$.

\begin{lemma} \Label{item:pt-size}
There are $q^3+q^2+q+1$ black points, $q^4-1$ white points and one red point.
\end{lemma}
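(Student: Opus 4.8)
The plan is to set up a small linear system in the unknowns $r, w, b$ (the numbers of red, white and black points) coming from counting incidences, using the exact values $|\H| = \frac12 q^2(q^2+1)$ and $t = (q+1)^2$ just established in Lemma~\ref{item:hsize}. First I would record the obvious global count $r + w + b = q^4 + q^3 + q^2 + q + 1$, the total number of points of $\PG(4,q)$. The second equation is the point--solid flag count already displayed as~(\ref{eqn:OP}): substituting $|\H| = \frac12 q^2(q^2+1)$ gives a linear relation $w \cdot \tfrac12 q^3 + b \cdot \tfrac12(q^3+q^2) = \tfrac12 q^2(q^2+1)(q^3+q^2+q+1)$, i.e. after clearing $\tfrac12 q^2$, a relation of the form $qw + (q+1)b = (q^2+1)(q^3+q^2+q+1)$.

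For the third equation I would count incident triples $(P, \Sigma, \Pi)$ with $\Sigma, \Pi$ distinct solids of $\H$ and $P \in \Sigma \cap \Pi$ — exactly the count in~(\ref{eqn:1}) — which after substituting the known value of $|\H|$ becomes a second linear relation between $w$ and $b$ (this is essentially~(\ref{eqn:wq}) with $\h = q^2+1$ plugged in, yielding $w$ directly, hence $wq = q^4(q^2+1) - \text{something}$, so $w = q^4 - 1$). Alternatively, and perhaps more cleanly, one can use Lemma~\ref{item:hsize}: each solid of $\H$ has exactly $t = (q+1)^2$ black points and $\sw = q^3 + q^2 + q + 1 - (q+1)^2 = q^3 - q$ white points, and then count flags $(P, \Sigma)$ with $P$ black (resp. white) and $\Sigma \in \H$ to get $b \cdot \tfrac12(q^3+q^2) = |\H|(q+1)^2$ and $w \cdot \tfrac12 q^3 = |\H|(q^3 - q)$. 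These immediately give $b = \frac{q^2(q^2+1)(q+1)^2}{q^3+q^2} = \frac{q^2(q^2+1)(q+1)^2}{q^2(q+1)} = (q^2+1)(q+1) = q^3 + q^2 + q + 1$ and $w = \frac{q^2(q^2+1)(q^3-q)}{q^3} = \frac{q^2(q^2+1)\cdot q(q^2-1)}{q^3} = (q^2+1)(q^2-1) = q^4 - 1$. Finally $r = (q^4+q^3+q^2+q+1) - (q^4-1) - (q^3+q^2+q+1) = 0$; but since $\H$ is non-empty, $\T$ is non-empty (a solid through any red point would be in $\T$) — wait, I need $r \geq 1$. Recomputing: the three values must sum to $q^4+q^3+q^2+q+1$, and $(q^4-1)+(q^3+q^2+q+1) = q^4 + q^3 + q^2 + q$, leaving $r = 1$.

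The computation is entirely routine; the only thing to be careful about is that the divisions producing $b$ and $w$ come out to integers, which they visibly do once the $q^2(q+1)$ factor in $|\H| \cdot (\text{multiplicity})$ is cancelled against $\tfrac12(q^3+q^2) = \tfrac12 q^2(q+1)$ and $\tfrac12 q^3$ respectively — so no genuine obstacle arises here, and the main point is simply to choose the two independent counting identities that most directly isolate $b$ and $w$. One small sanity check worth including: $r = 1$ is consistent with the expectation (stated in the surrounding text) that the unique red point will turn out to be the nucleus of the quadric formed by the black points, and with the fact that the $q^3+q^2+q+1$ black points are exactly the right number to form $Q(4,q)$.
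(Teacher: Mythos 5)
Your proposal is correct and follows essentially the same route as the paper: count flags $(P,\Sigma)$ with $\Sigma\in\H$ and $P$ black to get $b=q^3+q^2+q+1$ from $|\H|=\frac12q^2(q^2+1)$ and the $(q+1)^2$ black points per solid, then recover $w=q^4-1$ (the paper substitutes into~(\ref{eqn:OP}), you equivalently count white flags directly) and $r=1$ from the total point count. The only blemish is the momentary miscomputation ``$r=0$'' which you immediately correct; the final argument is sound.
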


\begin{proof}
Count in two ways incident pairs
 $(P,\Sigma)$ where $\Sigma\in\H$ and $P$ is black point in $\Sigma$.
 By Lemma~\ref{item:hsize},
 $|\H|=\frac12q^2(q^2+1)$ and $\Sigma$ contains $(q+1)^2$ black points, so we have
\begin{eqnarray*}
b. \tfrac12(q^3+q^2)=\tfrac12q^2(q^2+1).(q+1)^2
\end{eqnarray*}
giving $b=q^3+q^2+q+1$. Substituting this into (\ref{eqn:OP}) gives $w=q^4-1$ and so $r=1$, as required.
\end{proof}

Recall that we partition the solids of $\PG(4,q)$ into three sets: $\H$, $\T$ (the solids containing the red point), and $\E$ (the solids not in $\H$ which do not contain the red point). In Lemma~\ref{item:hsize} we counted the solids in $\H$. We now do the same for $\T$ and $\E$.
\begin{lemma}\Label{item:int}
\begin{enumerate}
\item Each solid in $\E$ contains $q^2+1$ black points, and $|\E|=\frac12q^2(q^2-1)$.
\item Each solid in $\T$ contains $q^2+q+1$ black points, and $|\T|=q^3+q^2+q+1$.
\end{enumerate}
\end{lemma}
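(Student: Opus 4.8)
The plan is to fix a solid $\Sigma$ of the relevant type, determine its number of black points by a double count against $\H$, and then obtain $|\T|$ and $|\E|$ from elementary incidence counts; everything reduces to linear equations, because $|\H|$ (Lemma~\ref{item:hsize}) and the numbers of red, white and black points (Lemma~\ref{item:pt-size}) are already known.

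First take $\Sigma\in\E$, and let $\sw$, $\sb$ be the numbers of white and black points of $\Sigma$; since $\Sigma$ contains no red point, $\sw+\sb=q^3+q^2+q+1$. I would count in two ways the incident pairs $(P,\Pi)$ with $\Pi\in\H$ and $P\in\Sigma\cap\Pi$. Summing over $P\in\Sigma$ and using that a white point lies on $\frac12 q^3$ solids of $\H$ and a black point on $\frac12(q^3+q^2)$, the count equals $\sw\cdot\frac12 q^3+\sb\cdot\frac12(q^3+q^2)$. Summing over $\Pi\in\H$ instead, each such $\Pi$ is distinct from $\Sigma$ (as $\Sigma\notin\H$), so $\Sigma\cap\Pi$ is a plane with $q^2+q+1$ points, and the count equals $|\H|(q^2+q+1)=\frac12 q^2(q^2+1)(q^2+q+1)$ by Lemma~\ref{item:hsize}. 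Eliminating $\sw$ gives a linear equation in $\sb$ whose solution is $\sb=q^2+1$, the first assertion of part~1. The computation for $\Sigma\in\T$ is identical, with one change: by Lemma~\ref{item:pt-size} there is a unique red point, so $\Sigma$ contains exactly one red point and $\sw+\sb=q^3+q^2+q$, while the left-hand side of the count is unchanged since the red point lies on no solid of $\H$. Solving gives $\sb=q^2+q+1$, the first assertion of part~2.

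It remains to count $\T$ and $\E$. The solids of $\T$ are exactly those through the unique red point, and the number of hyperplanes of $\PG(4,q)$ through a point is $q^3+q^2+q+1$, so $|\T|=q^3+q^2+q+1$. Since $\H$, $\T$, $\E$ partition the $q^4+q^3+q^2+q+1$ solids of $\PG(4,q)$, we get $|\E|=(q^4+q^3+q^2+q+1)-\frac12 q^2(q^2+1)-(q^3+q^2+q+1)=\frac12 q^2(q^2-1)$; alternatively one can recount the black-point/solid incidences inside $\E$, using that a black point lies on $q^2+q+1$ solids through the red point. None of this is delicate: the only points needing a moment's care are bookkeeping the red point in the $\T$ case and observing that $\Sigma\cap\Pi$ is always a plane because $\Sigma\notin\H$.
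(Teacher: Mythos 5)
Your proposal is correct and follows essentially the same route as the paper: the same double count of pairs $(P,\Pi)$ with $\Pi\in\H$ meeting the fixed solid in a plane of $q^2+q+1$ points, with the red point handled exactly as in the paper, and the same derivation of $|\T|$ from the unique red point and $|\E|$ from the partition of all solids.
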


%
%
%
%

\begin{proof} By Lemma~\ref{item:pt-size} there is a unique red point, so we have $|\T|=q^3+q^2+q+1$ (the number of solids through a point). Using Lemma~\ref{item:hsize} and $|\H|+|\E|+|\T|=q^4+q^3+q^2+q+1$ gives $|\E|=\frac12q^2(q^2-1)$ as required.

Let $\Pi\in\E$, let $\sb $ be the number of black points in $\Pi$ and let $\sw $ be the number of white points in $\Pi$. As solids in $\E$ do not contain a red point, we have
\begin{eqnarray}
\sw +\sb &=&q^3+q^2+q+1.\label{eqn:Ept}
\end{eqnarray}
Count in two ways incident pairs $(P,\Sigma)$ where $\Sigma\in\H$, and $P\in\Pi\cap\Sigma$. We have
\begin{eqnarray*}
\sw .\tfrac12 q^3+\sb .\tfrac12(q^3+q^2)&=&|\H|(q^2+q+1).
\end{eqnarray*}
Simplifying using (\ref{eqn:Ept}) gives $\sb =q^2+1$
proving part 1.

For part 2, let $\Pi\in\T$, let $\sb $ be the number of black points in $\Pi$ and let $\sw $ be the number of white points in $\Pi$. As solids in $\T$ contain the unique red point, we have
\begin{eqnarray}
\sw +\sb &=&q^3+q^2+q.\label{eqn-bing}
\end{eqnarray}
 Count in two ways incident pairs $(P,\Sigma)$ where $\Sigma\in\H$, and $P\in\Pi\cap\Sigma$, noting that as $\Sigma\in\H$, it does not contain the red point. We have
 \begin{eqnarray*}
\sw .\tfrac12 q^3+\sb .\tfrac12(q^3+q^2)&=&|\H|(q^2+q+1).
\end{eqnarray*}
Simplifying using (\ref{eqn-bing}) gives $\sb =q^2+q+1$ as required.
\end{proof}

\begin{remark}\label{rem}
Notice that so far we have only used Condition (I) and to prove that the set of black points is a quasi-quadric, it remains to show that each line through the red point contains exactly one black point.
Standard counting arguments show first that every plane through the red point contains exactly $q+1$ black points and then that every line through the red point contains exactly 1 black point.


\end{remark}

\begin{lemma}\label{plane-count-new}
All planes contain $1,q+1$ or $2q+1$ black points.
\end{lemma}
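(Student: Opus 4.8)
The plan is to consider an arbitrary plane $\pi$ of $\PG(4,q)$ and to determine the possible numbers of black points it can contain by a double-counting argument. Let $m$ denote the number of black points in $\pi$. The key is to count, in two ways, the incident pairs $(\Sigma, \pi)$ with $\Sigma \in \H$ and $\pi \subset \Sigma$ — but since the behaviour of planes in $\H$-solids is exactly what we are trying to understand, the cleaner route is instead to count incident triples involving $\pi$, a point $P \in \pi$, and a solid $\Sigma \in \H$ with $P \in \Sigma$. Summing $\sum_{P \in \pi} (\text{number of solids of } \H \text{ through } P)$ over the points of $\pi$ gives, by Condition (I), a quantity depending only on how many red, white and black points $\pi$ contains; on the other hand this same sum equals $\sum_{\Sigma \supset \pi, \, \Sigma \in \H}(q+1) + \sum_{\Sigma \cap \pi \text{ a line}, \, \Sigma \in \H}(q+1) + \cdots$, organised by the dimension of $\Sigma \cap \pi$. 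This is getting complicated, so the actual plan is to restrict attention to two sub-counts that isolate $m$.

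First I would count incident triples $(P, \Sigma, \Sigma')$ where $P \in \pi$ and $\Sigma, \Sigma'$ are distinct solids of $\H$ with $P \in \Sigma \cap \Sigma'$, and simultaneously the single count $\sum_{P\in\pi}(\#\{\Sigma \in \H : P \in \Sigma\})$. Writing $a$, $c$, $m$ for the numbers of red, white, black points of $\pi$ (so $a + c + m = q^2+q+1$), the single count gives $S_1 := c\cdot\tfrac12 q^3 + m\cdot\tfrac12(q^3+q^2)$, and using $a \in \{0,1\}$ together with Lemma~\ref{item:pt-size} (there is a unique red point) we get $S_1$ as an explicit linear function of $m$ alone. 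For the triple count, the right-hand side is $\sum_{P \in \pi} h_P(h_P - 1)$ where $h_P \in \{0, \tfrac12q^3, \tfrac12(q^3+q^2)\}$, again a function of $m$ (and of whether $\pi$ contains the red point); this yields a second linear relation. The left-hand sides, counted via solids, involve $\sum_{\Sigma, \Sigma' \in \H} |\Sigma \cap \Sigma' \cap \pi|$, which splits according to whether $\Sigma \cap \Sigma'$ is a plane, line, or point and whether $\pi$ lies inside it; organising this and using Condition~(II) (a plane in an $\H$-solid lies in $\geq \tfrac12 q$ of them) to control the number of $\H$-solids on $\pi$, I expect to extract a quadratic (or linear) Diophantine condition on $m$ whose only solutions in the range $0 \le m \le q^2+q+1$ are $m \in \{1, q+1, 2q+1\}$ — with the quasi-quadric structure from Theorem~\ref{auxiliary} (and Remark~\ref{rem}: every plane through the red point has $q+1$ black points, every line through it has exactly one) pinning down the remaining cases.

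The main obstacle is the bookkeeping in the triple-count: the left-hand side $\sum_{\Sigma \ne \Sigma' \in \H, P \in \pi}1$ requires knowing, for each pair of solids of $\H$, the size of their intersection with $\pi$, and pairs of solids can meet in a plane (possibly $\pi$ itself), a line, or a point. Controlling the number of solids of $\H$ through $\pi$ is exactly where Condition~(II) must enter — without it the count is underdetermined — so the delicate point is to show that the number of $\H$-solids containing $\pi$ is forced (to $0$, $\tfrac12 q$, or $q$, matching part (c) of the opening Lemma) and then to feed this back to solve for $m$. A secondary subtlety is handling separately the case where $\pi$ contains the red point: there Remark~\ref{rem} already gives $m = q+1$ directly, so the real content is planes $\pi$ avoiding the red point, for which I would leverage that $\pi$ then lies in exactly $q+1$ solids (none through the red point being forced), each contributing its black-point count via Lemma~\ref{item:hsize} and Lemma~\ref{item:int}. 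Reconciling these contributions with the quasi-quadric intersection numbers should leave $m \in \{1, q+1, 2q+1\}$ as the only possibilities.
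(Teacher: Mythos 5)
Your plan reaches, at best, the following: for a plane $\pi$ avoiding the red point, the count over the pencil of $q+1$ solids through $\pi$ (using the black-point counts $q^2+1$, $q^2+q+1$, $(q+1)^2$ from Lemmas~\ref{item:hsize} and \ref{item:int}) gives $m=2s+1$, where $s$ is the number of solids of $\H$ through $\pi$, and Condition~(II) gives $s=0$ or $\tfrac12q\le s\le q$. That only yields $m\in\{1\}\cup\{q+1,q+3,\dots,2q+1\}$; nothing in your proposal excludes the intermediate odd values strictly between $q+1$ and $2q+1$. Your hope of "reconciling with the quasi-quadric intersection numbers" cannot do this, since Theorem~\ref{auxiliary} and Remark~\ref{rem} speak only about solid sections and about lines/planes through the red point, not about plane sections off the red point. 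Likewise, your fallback of showing that $s$ is forced to be exactly $0$, $\tfrac12q$ or $q$ is circular: since $m=2s+1$, pinning down $s$ is equivalent to the lemma, and Condition~(II) supplies only the one-sided bound $s\ge \tfrac12 q$.

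The paper closes exactly this gap with a second-moment argument that your proposal lacks: fix a solid $\Sigma\in\H$ (every plane with $s>0$ lies in such a $\Sigma$, and $\Sigma$ contains no red point, so every plane $\pi_i$ of $\Sigma$ satisfies $q+1\le x_i\le 2q+1$ by the pencil count plus (II)). Counting incidences of black points, and of ordered pairs of distinct black points, with the planes of $\Sigma$ (using that $\Sigma$ has exactly $(q+1)^2$ black points and that each point, respectively each line, of $\Sigma$ lies in $q^2+q+1$, respectively $q+1$, planes of $\Sigma$) gives $\sum_i x_i$ and $\sum_i x_i(x_i-1)$ explicitly, whence $\sum_i\bigl(x_i-(q+1)\bigr)\bigl(x_i-(2q+1)\bigr)=0$; since each summand is $\le 0$ on the range $[q+1,2q+1]$, every $x_i$ equals $q+1$ or $2q+1$. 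Your alternative triple count $(P,\Sigma,\Sigma')$ over pairs of $\H$-solids through points of $\pi$ does not substitute for this, for the reason you yourself flag: its solid-side evaluation needs the distribution of $|\Sigma\cap\Sigma'\cap\pi|$ over pairs of $\H$-solids, which is unknown, so the count stays underdetermined. The part of your argument handling planes through the red point ($t=q+1$, $s=0$, hence $m=q+1$) is fine.
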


\begin{proof}
Let $\pi$ be a plane, and denote the number of black points in $\pi$ by $x$.  Further, let $t$ be the number of solids of $\T $ containing  $\pi$ and $s $ the number of solids of $\H$  containing   $\pi$, so   there are $q+1-s-t$ solids of $\E$  containing   $\pi$. 
Counting the black points in    $\PG(4,q)$ by counting in solids about $\pi$, we have (using Lemmas~\ref{item:hsize},  \ref{item:pt-size} and \ref{item:int})
 $$t(q^2 +q+1)+s(q^2 +2q+1)+(q+1-s-t)(q^2 +1)=q^3 +q^2 +q+1+qx.$$
 If $\pi$ contains the red point, then $t=q+1$ and $s=0$, hence $x=q+1$. If $\pi$ does not contain the red point, then $t=1$, and so $x=2s+1$.
 If 
  $s = 0$, then $x = 1$. If $s> 0$, then using (II), we have $q/2\leq s\leq q+1$ and so   $q+1\leq x\leq 2q+1$.   
 We now show that in this case (where $\pi$ lies in a solid of $\H$) we have $x=q+1$ or $2q+1$. 

Consider a fixed solid $\Sigma\in\mathcal{H}$.
We will first count pairs $(P,\pi)$ where $P$ is a black point in a plane $\pi$ belonging to $\Sigma$ and then triples $(P,P',\pi)$ with $P,P'$ black points, $\pi\in\Sigma$ where $P\in \pi$, $P'\in \pi$, $P\neq P'$. If $\pi_i$ is a plane  belonging to $\Sigma$, let $x_i$ denote the number of black points in $\pi_i$.
We have

$$ \sum_i x_i  = (q+1)^2\frac{(q^3-1)(q^2-1)}{(q^2-1)(q-1)}=(q+1)^2(q^2+q+1),$$

$$\sum_i x_i(x_i-1) = (q+1)^2(q^2+2q)\frac{q^2-1}{q-1}=(q+1)^2(q^2+2q)(q+1).$$

Hence we obtain
$$ \sum_i x_i^2 =  (q+1)^2[q^3 + 4 q^2 + 3 q + 1].$$

So we obtain that
$$ \sum_i (x_i-(q+1))(x_i-(2q+1)) = \sum_i x_i^2-(3q+2)\sum_i x_i+(2q+1)(q+1)(q^2+1)(q+1)  = 0.$$
As
$q+1\leq x_i\leq 2q+1$, we 
have $x_i=q+1$ or $2q+1$ as required. 
\end{proof}

 {\bfseries Proof of Theorem~\ref{mainresult}}
 Let $\mathcal B$ be the set of black points.
 By Lemma~\ref{plane-count-new}, planes meet $\mathcal B$ in either $1$, $q+1$ or $2q+1$ points. By Lemmas~\ref{item:hsize} and \ref{item:int}, solids meet $\mathcal B$ in either $q^2+1$, $q^2+q+1$ or $q^2+2q+1$ points. Hence by Result~\ref{schil},
 $\mathcal B$ is the set of points of
 points of  a non-singular quadric  of $\PG(4,q)$, and so $\H$ is the set of solids of $\PG(4,q)$ that meet a non-singular quadric in a hyperbolic quadric as required. \hfill $\square$

We note that it follows that $\E$ is the set of solids that meet the non-singular quadric in an elliptic quadric, and $\T$ is the set of solids that meet the non-singular quadric in a quadratic cone.

\bigskip\bigskip

{\bfseries Author information}

S.G. Barwick. School of Mathematical Sciences, University of Adelaide, Adelaide, 5005, Australia.
susan.barwick@adelaide.edu.au

A.M.W. Hui. Statistics Program, BNU-HKBU United International College, Zhuhai, China.
alicemwhui@uic.edu.hk, huimanwa@gmail.com

W.-A. Jackson. School of Mathematical Sciences, University of Adelaide, Adelaide, 5005, Australia.
wen.jackson@adelaide.edu.au

J. Schillewaert, Department of Mathematics, University of Auckland, New Zealand.\\
j.schillewaert@auckland.ac.nz

{\bf Acknowledgements}
We would like the anonymous referee for their valuable feedback which improved both the content and exposition of this paper.
A.M.W. Hui is supported by the Young Scientists Fund (Grant No. 11701035) of the National Natural Science Foundation of China, and
J. Schillewaert by a University of Auckland FRDF grant.


\begin{thebibliography}{99}
\addtolength{\itemsep}{-0.3\baselineskip}
\bibitem{bichara} A. Bichara and C. Zanella. Tangential Tallini sets in finite Grassmannians of lines. \emph{J. Combin. Theory Ser. A}, {\bf109} (2005), 189--202.


\bibitem{buekenhout} F. Buekenhout. Ensembles quadratiques de sespaces projectifs, \emph{Math Z.}, {\bfseries 110} (1969) 306--318.

\bibitem{butler1} D. K. Butler. A characterisation of the planes meeting a non-singular quadric of $\PG(4,q)$ in a conic.
\emph{Combinatorica},
{\bf 33}
 (2013) 161--179.


\bibitem{QQ}
F. De Clerck, N. Hamilton, C. O'Keefe and T. Penttila,
Quasi-quadrics and related structures, Aust. J. Combin. {\bf 22} (2000), 151--166.

\bibitem{deresmini} M. J. de Resmini.
A characterization of the set of lines either tangent to or lying on
a nonsingular quadric in $\PG(4,q)$, $q$
odd.
\emph{Finite Geometries, Lecture Notes in Pure and Appl. Math.},
{\bfseries 103} (1985) 271--288.

\bibitem{DWS}
{S.~De Winter and J.~Schillewaert.}
A characterization of finite polar spaces by intersection numbers, {\em Combinatorica}  {\bf 30} (2010), 25--45.


\bibitem{ferri}
O. Ferri and G. Tallini. Una caratterizzazione delle quadriche non singolari di $\PG(4,q)$. \emph{Rend. Mat. Appl.}, {\bf 11} (1991) 15--21.


\bibitem{HT} J.W.P.\ Hirschfeld and J.A.\ Thas. General Galois Geometries, 2nd edition, Springer, London, (2016), xvi + 409 pp.


\bibitem{schil} J. Schillewaert. A characterization of quadrics by intersection numbers. \emph{Des. Codes Cryptogr.}, {\bf 47} (2008) 165--175.

\bibitem{tallini} G. Tallini. Sulle $k$-calotte di uno spazio lineare finito,		
			{\em Ann. Mat. Pura Appl.}, {\bf 42} (1956) 119--164.

\end{thebibliography}
\end{document}